\newtheorem{theorem}{Theorem}
\newtheorem{lemma}{Lemma}
\newtheorem{proposition}{Proposition}
\newcommand{\abs}[1]{\lvert#1\rvert}
\begin{document}

\title[Zeros of partial sums of the Dedekind zeta function of a cyclotomic field]{Zeros of partial sums of the Dedekind zeta function of a cyclotomic field}

\author[Andrew Ledoan]{Andrew Ledoan}

\address{Department of Mathematics, University of Tennessee at Chattanooga, 417F EMCS Building (Department 6956), 615 McCallie Avenue, Chattanooga, Tennessee 37403-2598, United States of America}

\email{andrew-ledoan@utc.edu}

\author[Arindam Roy]{Arindam Roy}

\address{Department of Mathematics, University of Illinois at Urbana-Champaign, 165 Altgeld Hall, 1409 W. Green Street (MC-382), Urbana, Illinois 61801-2975, United States of America}

\email{roy22@illinois.edu}

\author[Alexandru Zaharescu]{Alexandru Zaharescu}

\address{Department of Mathematics, University of Illinois at Urbana-Champaign, 449 Altgeld Hall, 1409 W. Green Street (MC-382), Urbana, Illinois 61801-2975, United States of America}

\email{zaharesc@illinois.edu}

\subjclass[2010]{Primary 11M41; Secondary 11M26.}

\keywords{Approximate functional equation; Dedekind zeta function; Dirichlet polynomial; Distribution of zeros; Partial sums; Riemann zeta-function}

\begin{abstract}

In this article, we study the zeros of the partial sums of the Dedekind zeta function of a cyclotomic field $K$ defined by the truncated Dirichlet series
\[
\zeta_{K, X} (s)
 = \sum_{ \|\mathfrak{a}\| \leq  X } \frac{1}{\|\mathfrak{a}\|^{s}},
\]
where the sum is to be taken over nonzero integral ideals $\mathfrak{a}$ of $K$ and $\|\mathfrak{a}\|$ denotes the absolute norm of $\mathfrak{a}$. Specifically, we establish the zero-free regions for $\zeta_{K, X} (s)$ and estimate the number of zeros of $\zeta_{K, X} (s)$ up to height $T$.
\end{abstract}

\maketitle

\thispagestyle{empty}

\section{Introduction and statement of results}

A first generalization of the Riemann zeta-function $\zeta (s)$ is provided by the Dirichlet $L$-functions. Subsequently, Dedekind studied the zeta function $\zeta_{K} (s)$ of an arbitrary algebraic number field $K$, defined for $\mbox{Re} (s) > 1$ by
\begin{equation*}
\zeta_{K} (s)
 = \sum_{\mathfrak{a}} \frac{1}{\|\mathfrak{a}\|^{s}}
 = \sum_{n = 1}^{\infty} \frac{a (n)}{n^{s}},
\end{equation*}
where the first sum is to be taken over all nonzero integral ideals $\mathfrak{a}$ of $K$ and where $\|\mathfrak{a}\|$ denotes the absolute norm of $\mathfrak{a}$. In the second sum, $a (n)$ is used to denote the number of integral ideals $\mathfrak{a}$ with norm $\|\mathfrak{a}\| = n$. 

As in the particular case $K = \mathds{Q}$, where $\zeta (s) = \zeta_{\mathds{Q}} (s)$, the function $\zeta_{K} (s)$ is analytic everywhere except solely for a simple pole at $s = 1$. (See Davenport \cite{Davenport2000} and Neukrich \cite{Neukirch1999}.) The residue of this pole is given by the analytic class number formula
\begin{equation*}
\mathop{\mathrm{Res}}_{s = 1} \ \! (\zeta_{K} (s))
  = \frac{2^{r} \pi^{n_0 - r} R_{K} h_{K}}{w_{K} \sqrt{\abs{d_{K}}}},
\end{equation*}
where $r = r_{1} + r_{2}$ (with $r_{1}$ being the number of real embeddings and $r_{2}$ being the number of complex conjugate pairs of complex embeddings of $K$), $n_{0} = \lbrack K \colon \mathds{Q} \rbrack$ denotes the degree of $K / \mathds{Q}$, $R_{K}$ denotes the regulator, $h_{K}$ denotes the class number, $w_{K}$ denotes the number of roots of unity in $K$, and $d_{K}$ denotes the discriminant of $K$. (See Neukrich \cite[page 467]{Neukirch1999}.)

For $\zeta (s)$, Hardy and Littlewood \cite{HardyLittlewood} provided the approximate functional equation
\begin{equation*}
\zeta(s)
 = \sum_{n \leq X} \frac{1}{n^{s}} + \pi^{s - 1 / 2} \frac{\Gamma((1 - s) / 2)}{\Gamma(s / 2)} \sum_{n \leq Y} \frac{1}{n^{1 - s}} + O (X^{-\sigma}) + O (Y^{\sigma - 1} \abs{t}^{-\sigma + 1 / 2}),
\end{equation*}
where $s = \sigma + it$, $0 \leq \sigma \leq 1$, $X > H > 0,$ $Y > H > 0,$ and $2 \pi X Y = \abs{t}$, with the constant implied by the big-$O$ term depending on $H$ only. Such approximate functional equations motivate the study of properties of the partial sums $F_{X} (s)$ of $\zeta (s)$ defined by
\begin{equation*}
F_{X} (s)
 \mathrel{\mathop:}= \sum_{n \leq  X } \frac{1}{n^{s}}.
\end{equation*}

Gonek and one of the authors \cite{GonekLedoan2010} studied the distribution of zeros of the partial sums $F_{X} (s)$. The authors denote the number of typical zeros $\rho_{X} = \beta_{X} + i \gamma_{X}$ of the partial sums $F_{X} (s)$ with ordinates $0 \leq \gamma_{X} \leq T$ by $N_{X} (T)$. In the case that $T$ is the ordinate of a zero, they define $N_{X} (T)$ as $\lim_{\epsilon \to 0^{+}} N_{X} (T + \epsilon)$. In \cite{GonekLedoan2010}, the authors are concerned with results on $N_X(T)$ as both $X$ and $T$ tends to infinity. 

Theorem 1 in \cite{GonekLedoan2010} collects together a number of known results on the zeros of $F_{X} (s)$ (see Borwein, Fee, Ferguson and Waal  \cite{BorweinFeeFergusonWaall2007}, Montgomery \cite{Montgomery1983}, and Montgomery and Vaughan \cite{MontgomeryVaughan2001}), which can be summarized as follows: 

\bigskip
{\it The zeros of $F_{X} (s)$ lie in the strip $\alpha < \sigma < \beta$, where $\alpha$ and $\beta$ are the unique solutions of the equations $1 + 2^{-\sigma} + \dots + (X - 1)^{-\sigma} = X^{-\sigma}$ and $2^{-\sigma} + 3^{-\sigma} + \dots + X^{-\sigma} = 1$, respectively. In particular, $\alpha > -X$ and $\beta < 1.72865$. Furthermore, there exists a number $X_{0}$ such that if $X \geq X_{0}$, then $F_{X} (s)$ has no zeros in the half-plane
\begin{equation*}
\sigma
 \geq 1 + \left(\frac{4}{\pi} -1 \right) \frac{\log\log X}{\log X}.
\end{equation*}
On the other hand, for any constant $C$ satisfying the inequalities $0 < C < 4 / \pi - 1$ there exists a number $X_{0}$ depending on $C$ only such that if $X \geq X_0$, then $F_{X} (s)$ has  zeros in the half-plane
\begin{equation*}
\sigma
 > 1 + \frac{C \log\log X}{\log X}.
\end{equation*}}

\medskip
Theorem 2 in \cite{GonekLedoan2010} (see also Langer \cite{Langer1931}) can be summarized as follows: 

\bigskip
{\it If $X$ and $T$ are both greater than or equal to 2, then one has
\begin{equation*}
\left|N_{X} (T) - \frac{T}{2 \pi} \log \lbrack X \rbrack\right|
 < \frac{X}{2}.
\end{equation*}}

Here and henceforth, $\lbrack X \rbrack$ denotes the greatest integer less than or equal to $X$. Chandrasekharan and Narasimhan \cite{ChandrasekharanNarasimhan1963} gave an approximate functional equation for the Dedekind zeta function
\begin{equation} \label{apfe}
\zeta_{K} (s)
 = \sum_{n \leq X} \frac{a (n)}{n^{s}} + B^{2 s - 1}\frac{A(1 - s)}{A (s)} \sum_{n \leq Y} \frac{a (n)}{n^{1 - s}} + O (X^{1 - \sigma - 1 / n_{0}} \log X),
\end{equation}
where $A (s) = \Gamma^{r_{1}} (s / 2) \Gamma^{r_{2}} (s)$, $B = 2^{r_{2}} \pi^{n_{0} / 2} / \sqrt{\abs{d_{K}}}$, $X > H > 0,$ $Y > H > 0,$  $X Y = \abs{d_{K}} (\abs{t} / 2 \pi)^{n_{0}}$, and $C_{1} < X / Y < C_{2}$ for some  constants $C_{1}$ and $C_{2}$. In the present article, we investigate the distribution of zeros of the partial sums of the function $\zeta_{K} (s)$ defined by
\begin{equation*}
\zeta_{K, X} (s)
 \mathrel{\mathop:}= \sum_{\|\mathfrak{a}\| \leq X} \frac{1}{\|\mathfrak{a}\|^{s}}
 = \sum_{n \leq X} \frac{a (n)}{n^{s}},
\end{equation*}
which appears in the approximate functional equation \eqref{apfe}. Our purpose is to determine whether the partial sums $\zeta_{K, X} (s)$ exhibit similar properties. To this end, we denote the number of non-real zeros $\rho_{K, X} = \beta_{K, X} + i \gamma_{K, X}$ of the partial sums $\zeta_{K, X} (s)$ with ordinates $0 \leq \gamma_{K, X} \leq T$ by $N_{K, X} (T)$. If $T$ is the ordinate of a zero, then $N_{K, X} (T)$ is to be defined by $\lim_{\epsilon \to 0^{+}} N_{K, X} (T + \epsilon)$.

Our first result about the zeros of  $\zeta_{K, X} (s)$ is summarized as follows.

\begin{proposition} \label{prop1}
Let $K$ be an arbitrary algebraic number field of degree $n_{0} = \lbrack K \colon \mathds{Q} \rbrack$ over the field $\mathds{Q}$ of rational numbers, let $X$ be a real number greater than or equal to 2, and denote by $s$ the complex variable $\sigma + i t$. Then there exist two real numbers $\alpha$ and $\beta$, with $\alpha$ depending on $n_{0}$ and $X$ only and with $\beta$ depending on $n_{0}$ only, such that the zeros of the partial sums $\zeta_{K, X} (s)$ all lie within the rectilinear strip of the complex plane given by the inequalities $\alpha < \sigma < \beta$.  
\end{proposition}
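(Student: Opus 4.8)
The plan is to treat $\zeta_{K,X}(s)=\sum_{n\le X}a(n)n^{-s}$ as a finite Dirichlet polynomial whose coefficients are nonnegative integers with $a(1)=1$, and to show that near the two ends of the real axis a single term dominates the remainder of the sum, so that no zero can occur too far to the left or to the right.

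For the right-hand boundary I would start from the triangle inequality
$\lvert\zeta_{K,X}(\sigma+it)\rvert\ge a(1)-\sum_{2\le n\le X}a(n)n^{-\sigma}=1-\sum_{2\le n\le X}a(n)n^{-\sigma}$,
whose right side is a nonincreasing function of $\sigma$ tending to $0$. To make the resulting threshold depend on $n_0$ alone rather than on the individual coefficients, I would compare $a(n)$ with the $n_0$-fold divisor function $d_{n_0}(n)$, the coefficient of $n^{-s}$ in $\zeta(s)^{n_0}$: since at most $n_0$ prime ideals lie above each rational prime and each has residue degree at least $1$, the Euler product $\zeta_K(s)=\prod_p\prod_{\mathfrak p\mid p}(1-\|\mathfrak p\|^{-s})^{-1}$ is dominated coefficientwise by $\zeta(s)^{n_0}$, giving $a(n)\le d_{n_0}(n)$ and hence $\sum_{2\le n\le X}a(n)n^{-\sigma}\le\zeta(\sigma)^{n_0}-1$ for $\sigma>1$. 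Defining $\beta$ to be the unique real solution of $\zeta(\sigma)=2^{1/n_0}$, which lies in $(1,\infty)$ and depends on $n_0$ only, I conclude that $\lvert\zeta_{K,X}(s)\rvert>0$ for every $\sigma>\beta$. For $n_0=1$ this recovers $\beta$ as the root of $\zeta(\sigma)=2$, in agreement with the classical statement.

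For the left-hand boundary let $M$ be the largest integer $\le X$ with $a(M)\ne0$; if $M=1$ the polynomial is constant and has no zeros, so I assume $M\ge2$. Pulling out the top term gives, for $\sigma<0$,
$\lvert\zeta_{K,X}(\sigma+it)\rvert\ge a(M)M^{-\sigma}-\sum_{1\le n<M}a(n)n^{-\sigma}\ge M^{-\sigma}-(M-1)^{-\sigma}\sum_{n\le X}a(n)$,
using $a(M)\ge1$ and $n^{-\sigma}\le(M-1)^{-\sigma}$ for $n\le M-1$ when $\sigma<0$. This is positive as soon as $(M/(M-1))^{-\sigma}>\sum_{n\le X}a(n)$. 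Bounding $\sum_{n\le X}a(n)\le\sum_{n\le X}d_{n_0}(n)=:C(n_0,X)$ and replacing $M$ by its maximal admissible value $\lfloor X\rfloor$ (which only makes the threshold more negative), I obtain a number $\alpha$ depending on $n_0$ and $X$ only such that $\lvert\zeta_{K,X}(s)\rvert>0$ for every $\sigma<\alpha$. Since $\alpha<0<1<\beta$, combining the two halves places every zero of $\zeta_{K,X}(s)$ in the strip $\alpha<\sigma<\beta$.

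I expect the one genuinely substantive step to be the coefficientwise domination $a(n)\le d_{n_0}(n)$, as this is what converts field-specific arithmetic into a bound governed by the degree $n_0$ alone and thereby frees $\beta$ from any dependence on $K$ or $X$; the remaining estimates are routine manipulations of a finite nonnegative Dirichlet polynomial. A secondary point requiring a little care is the uniformization over $K$ in the definition of $\alpha$, where one must check that using the extreme values $M=\lfloor X\rfloor$ and $\sum_{n\le X}a(n)=C(n_0,X)$ yields a threshold valid for every field of degree $n_0$.
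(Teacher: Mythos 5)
Your proposal is correct, and its skeleton is the same as the paper's: on the right you show the constant term $a(1)=1$ dominates, and on the left you show the top term $a(M)M^{-\sigma}$ dominates, so the interest lies in how each half is quantified. On the right the paper bounds $a(n)\le d(n)^{n_0-1}\le C_{\epsilon_0,n_0}n^{\epsilon_0 n_0}$ and extracts $\beta$ from a comparison with $\sum n^{-(\beta-\epsilon_0 n_0)}$, whereas you prove the coefficientwise domination $a(n)\le d_{n_0}(n)$ directly from the Euler product (each local factor $\prod_{\mathfrak p\mid p}(1-\|\mathfrak p\|^{-s})^{-1}$ is dominated by $(1-p^{-s})^{-n_0}$ since $g\le n_0$ and $f_i\ge 1$), which lets you evaluate the majorant exactly as $\zeta(\sigma)^{n_0}-1$ and define $\beta$ cleanly by $\zeta(\beta)=2^{1/n_0}$; this avoids the $\epsilon_0$ bookkeeping entirely and is, if anything, tidier. (One pedantic point: at $\sigma=\beta$ itself your displayed bound only gives $\lvert\zeta_{K,X}\rvert\ge 0$, but the finite truncation is strictly smaller than the full series $\zeta(\beta)^{n_0}-1=1$, so nonvanishing on the closed half-plane $\sigma\ge\beta$ still holds.) On the left the paper invokes Wigert's theorem $d(n)\le n^{(\delta_0+\log 2)/\log\log n}$ to get the comparatively sharp threshold $\alpha\asymp -n_0 N\log N/\log\log N$, while you use only $n^{-\sigma}\le (M-1)^{-\sigma}$ and the crude count $\sum_{n\le X}a(n)\le\sum_{n\le X}d_{n_0}(n)$, arriving at $\alpha\approx-\log C(n_0,X)/\log(\lfloor X\rfloor/(\lfloor X\rfloor-1))\asymp -X\log X$. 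Your left-hand argument is more elementary and your monotonicity check (that taking $M=\lfloor X\rfloor$ gives the most negative, hence universally valid, threshold) is correct; the price is a quantitatively weaker $\alpha$, which is immaterial for the proposition as stated but matters if one cares about the actual width of the zero-containing strip.
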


Our second theorem provides an approximate formula for $N_{K, X} (T)$, the number of zeros of the partial sums $\zeta_{K, X} (s)$ in the rectangle determined by the inequalities $\alpha < \sigma < \beta$ and $0 < t < T$, where $\alpha$ and $\beta$ are provided in Proposition \ref{prop1}. Let $K$ be any algebraic number field of degree $n_{0} = \lbrack K \colon \mathds{Q} \rbrack$ over the field $\mathds{Q}$ of rational numbers. In a similar fashion to the case of Riemann zeta function (see \cite{GonekLedoan2010} and \cite{Langer1931}), it can be shown that
\begin{equation} \label{lrz2}
\left\lvert N_{K, X} (T)-\frac{T}{2 \pi} \log \lbrack X \rbrack\right\rvert\leq \frac{X}{2},
\end{equation}
where $X$ and $T$ both go to infinity together. However, if $K = \mathds{Q} (\zeta_{q})$ is a cyclotomic field, where $q\geq 2$, we can significantly improve the error term in \eqref{lrz2}.

\begin{theorem} \label{thm1}
Let $q \geq 2$, let ${\zeta_{q}}$ be a primitive root of unity of order $q$, let $K = \mathds{Q}(\zeta_{q})$, and let $T, X \geq 3$. Let, further, $N$ be the largest integer less than or equal to $X$ such that $a (N)\neq 0$. We have
\begin{equation} \label{lrz1}
N_{K, X} (T)
 = \frac{T}{2 \pi} \log N + O_{q} \left(X \left(\frac{\log\log X}{\log X}\right)^{1 - 1 / \phi (q)}\right),
\end{equation}
where $\phi$ is Euler's totient function.
\end{theorem}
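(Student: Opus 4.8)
The plan is to count the zeros by the argument principle applied to $\zeta_{K,X}(s)$ on a rectangle $\mathcal{R}$ whose horizontal sides lie on $t=0$ and $t=T$ and whose vertical sides lie to the left of $\alpha$ and to the right of $\beta$, so that by Proposition \ref{prop1} every zero with $0<t<T$ is interior to $\mathcal{R}$ and $2\pi N_{K,X}(T)=\Delta_{\mathcal{R}}\arg\zeta_{K,X}(s)$. Three of the four sides are easy. On the bottom side one has $\zeta_{K,X}(\sigma)=\sum_{n\le X}a(n)n^{-\sigma}>0$, since $a(1)=1$ and every $a(n)\ge0$, so this side contributes nothing. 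Pushing the right side far enough right that $\abs{\zeta_{K,X}(s)-1}<1$ (possible since $\zeta_{K,X}(s)\to1$ as $\sigma\to+\infty$, with the required abscissa depending only on $q$), the image stays in $\mathrm{Re}>0$ and the side contributes $O_{q}(1)$. On the left side I write $\zeta_{K,X}(s)=a(N)N^{-s}\bigl(1+g(s)\bigr)$ with
\[
g(s)=\sum_{\substack{n<N\\ a(n)\ne0}}\frac{a(n)}{a(N)}\Bigl(\frac{N}{n}\Bigr)^{s},
\]
and let the side recede to $\sigma\to-\infty$, which is legitimate because no zeros lie left of $\alpha$. Then $g\to0$ uniformly in $t$, so $\arg(1+g)$ has bounded variation, while as $t$ decreases from $T$ to $0$ the term $\arg(a(N)N^{-s})=-t\log N$ increases by $T\log N$. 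This side therefore contributes $T\log N+O(1)$, which already yields the main term $\tfrac{T}{2\pi}\log N$.

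The entire error is thus carried by the top side $t=T$, and the heart of the matter is to bound $\Delta_{\text{top}}\arg\zeta_{K,X}(\sigma+iT)$. Here I would use the Backlund device: the variation of the argument along a horizontal segment is at most $\pi$ times one more than the number of zeros in $\sigma$ of
\[
\mathrm{Re}\,\zeta_{K,X}(\sigma+iT)=\sum_{n\le X}a(n)\cos(T\log n)\,n^{-\sigma}.
\]
As a function of the real variable $\sigma$ this is a real exponential polynomial $\sum_{j}b_{j}e^{-\sigma\log n_{j}}$ with distinct frequencies $\log n_{j}$, so by the classical bound for the number of real zeros of an exponential polynomial (a generalized Descartes rule) it has at most $M-1$ real zeros, where $M=\#\{n\le X:a(n)\cos(T\log n)\ne0\}\le\#\{n\le X:a(n)\ne0\}$; crucially this bound is independent of the length of the $\sigma$-interval, so receding the left side does not spoil it. Consequently $\Delta_{\text{top}}\arg\zeta_{K,X}=O_{q}\bigl(\#\{n\le X:a(n)\ne0\}\bigr)$, uniformly in $T$.

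It remains to count the admissible norms, and this is the step where the cyclotomic structure, and hence the exponent $1-1/\phi(q)$, enters. For $p\nmid q$ the prime $p$ splits in $K=\mathds{Q}(\zeta_{q})$ into $\phi(q)/f_{p}$ primes each of residue degree $f_{p}=\operatorname{ord}_{q}(p)$, so $a(n)\ne0$ if and only if $f_{p}\mid v_{p}(n)$ for every $p\mid n$, the finitely many ramified primes $p\mid q$ causing only a bounded distortion. Thus $\mathds{1}[a(n)\ne0]$ has Dirichlet series $\prod_{p}(1-p^{-f_{p}s})^{-1}$, whose singularity at $s=1$ is governed solely by the completely split primes $f_{p}=1$, that is, the primes $p\equiv1\pmod q$; these have Dirichlet density $1/\phi(q)$, so the product is $(s-1)^{-1/\phi(q)}$ times a function holomorphic and non-vanishing for $\mathrm{Re}\,s>1/2$. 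A Selberg--Delange (or Landau Tauberian) argument then gives $\#\{n\le X:a(n)\ne0\}\ll_{q}X(\log X)^{-(1-1/\phi(q))}$, which is comfortably within the claimed bound $X(\log\log X/\log X)^{1-1/\phi(q)}$. Collecting the four sides yields $N_{K,X}(T)=\tfrac{T}{2\pi}\log N+O_{q}\bigl(X(\log X)^{-(1-1/\phi(q))}\bigr)$, and hence \eqref{lrz1}.

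The main obstacle I anticipate is the norm-counting step: obtaining the exponent $1-1/\phi(q)$ rigorously requires the Dirichlet density $1/\phi(q)$ of the completely split primes together with a Tauberian analysis of $\prod_{p}(1-p^{-f_{p}s})^{-1}$ near $s=1$, and this is where the genuine arithmetic content lives. The remaining difficulties are more technical than conceptual, namely making the Backlund argument-variation estimate and the exponential-polynomial zero bound uniform as $X$ and $T$ tend to infinity together, and ensuring the contour can be taken to avoid zeros (so that $\log N$, rather than $\log X$, is the correct main term).
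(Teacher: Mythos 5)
Your proposal is correct and the contour/argument-principle skeleton is the same as the paper's: the bottom edge contributes nothing, the right edge contributes $O(1)$ because $\abs{\zeta_{K,X}(s)-1}<1$ there, the left edge gives the main term $T\log N$ after factoring out the dominant term $a(N)N^{-s}$, and the top edge is handled by a Backlund-type bound reducing the argument variation to the number of real zeros of an exponential polynomial, hence to $\#\{n\le X: a(n)\neq 0\}$ (you use $\mathrm{Re}$ where the paper uses $\mathrm{Im}$, and you let the left edge recede to $-\infty$ where the paper fixes it at the $\alpha$ of Proposition \ref{prop1}; both are immaterial variations). The genuine divergence is in the key counting lemma. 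The paper proves $\#\{n\le x: a(n)\neq 0\}=O_q\bigl(x(\log\log x/\log x)^{1-1/\phi(q)}\bigr)$ elementarily: it factors each such $n$ as a squareful part times a squarefree part supported on primes $p\equiv 1\pmod q$ and applies Brun's pure sieve (Lemmas \ref{l1} and \ref{l2}), which is self-contained but loses a power of $\log\log x$. You instead identify the condition $a(n)\neq 0$ with $f_p\mid v_p(n)$ for all $p\nmid q$ (which is the correct local description), form the Dirichlet series $\prod_p(1-p^{-f_ps})^{-1}$ with a $(s-1)^{-1/\phi(q)}$ singularity coming from the density-$1/\phi(q)$ split primes, and invoke Selberg--Delange/Landau. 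That route requires heavier analytic input (continuation and zero-free regions for the $L(s,\chi)$), but it yields the sharper bound $O_q\bigl(x(\log x)^{-(1-1/\phi(q))}\bigr)$, which would in fact improve the error term in \eqref{lrz1} by removing the $(\log\log X)^{1-1/\phi(q)}$ factor. So your argument, if carried out in full, proves a slightly stronger statement than the theorem as stated.
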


\section{Preliminary Results}
To prove Theorem \ref{thm1}, we will make use of two auxiliary lemmas.
\begin{lemma} \label{l1}
Fix a positive integer $q\geq2$. We have
\begin{equation*}
\#\{n \leq y \colon \mu (n) \neq 0\ \!\text{and }\ \! p\! \mid\! n\ \!\text{ imply }\ \! p\equiv 1\pmod{q}\}
 = O_{q} \left(y \left(\frac{\log\log y}{\log y}\right)^{1 - 1 / \phi (q)}\right),
\end{equation*}
where $\mu$ denotes the M\"{o}bius function. 
\end{lemma}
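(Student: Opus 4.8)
The plan is to recognize the quantity as a count of squarefree integers all of whose prime factors lie in a set of primes of relative density $1/\phi(q)$, and to estimate it by the Hardy--Ramanujan--Landau method of sorting by the number of prime factors. Write $\mathcal P=\{p:p\equiv1\pmod q\}$ and let $S(y)$ denote the quantity to be bounded. For $k\ge0$ set
\[
T_k(y)=\#\{n\le y:\mu(n)\ne0,\ \omega(n)=k,\ p\mid n\Rightarrow p\in\mathcal P\},
\]
so that $S(y)=\sum_{k\ge0}T_k(y)$, with $T_0(y)=1$ and $T_1(y)=\pi(y;q,1)$. Two standard inputs drive the argument. First, the Mertens-type estimate for primes in an arithmetic progression,
\[
\sum_{\substack{p\le x\\ p\equiv1\,(q)}}\frac1p=\frac1{\phi(q)}\log\log x+O_q(1),
\]
which is exactly where the exponent $1/\phi(q)$ (the density of $\mathcal P$) enters. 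Second, a Chebyshev/Brun--Titchmarsh upper bound $\pi(x;q,1)\ll_q x/(\phi(q)\log x)$, valid uniformly for $x\ge2$.

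First I would record the recursion obtained by stripping off one prime factor: since each squarefree $n$ with $\omega(n)=k$ can be written as $n=pm$ in exactly $k$ ways with $p\mid n$, one has
\[
T_k(y)\le\frac1k\sum_{\substack{p\le y\\ p\equiv1\,(q)}}T_{k-1}(y/p).
\]
Next I would prove by induction on $k$ a Hardy--Ramanujan-type bound of the shape $T_k(y)\le\dfrac{By}{\log y}\cdot\dfrac{\Lambda(y)^{k-1}}{(k-1)!}$ for $k\ge1$, where $\Lambda(y)=\sum_{p\le y,\,p\equiv1(q)}1/p+O_q(1)=\frac1{\phi(q)}\log\log y+O_q(1)$ and $B=B(q)$ is a constant; the base case $k=1$ is precisely Brun--Titchmarsh.

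Finally, summing the resulting exponential series over $k$ gives
\[
S(y)\le1+\frac{By}{\log y}\exp(\Lambda(y))\ll_q\frac{y}{\log y}\,(\log y)^{1/\phi(q)}=\frac{y}{(\log y)^{1-1/\phi(q)}},
\]
which is in fact stronger than the stated estimate: since $1-1/\phi(q)\ge0$ we have $(\log\log y)^{1-1/\phi(q)}\ge1$, so the asserted bound with the extra factor $(\log\log y)^{1-1/\phi(q)}$ follows a fortiori. (One may of course also obtain the lossy bound directly by a cruder summation, but proving the sharper form is cleaner.)

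The main obstacle is closing the induction uniformly in both $k$ and $y$. The inductive step reduces, after inserting the hypothesis and using $\Lambda(y/p)\le\Lambda(y)$, to controlling the convolution $\sum_{p\le y,\,p\equiv1(q)}\frac1{p\,\log(y/p)}$ and comparing it with $\Lambda(y)/\log y$. The range $p\le\sqrt y$ is immediate from the Mertens estimate, since there $\log(y/p)\asymp\log y$; the range $\sqrt y<p\le y$, where $\log(y/p)$ degenerates, must be absorbed via Brun--Titchmarsh together with the integrability of $1/\log(y/t)$ near $t=y$. The delicate point is arranging the constant multiplying $\Lambda(y)^{k-1}/(k-1)!$ to be independent of $k$, so that the series sums geometrically, and it is here that the exact Mertens constant $1/\phi(q)$ must be tracked in order to land the exponent $1-1/\phi(q)$.
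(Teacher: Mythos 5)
Your overall strategy (count squarefree integers built from the primes $p\equiv 1\pmod q$, a set of Mertens density $1/\phi(q)$, by sorting according to $\omega(n)=k$) is a legitimate classical route, and the bound you aim for, $S(y)\ll_q y(\log y)^{1/\phi(q)-1}$, is indeed true and stronger than the lemma. It is also genuinely different from the paper, which instead applies Brun's pure sieve to remove multiples of the primes $p\not\equiv 1\pmod q$ up to a level $z$ with $\log z=c\log y/\log\log y$; the restricted sieving level is precisely what produces the $(\log\log y)^{1-1/\phi(q)}$ loss in the statement. However, your inductive step, as described, fails. After "inserting the hypothesis and using $\Lambda(y/p)\le\Lambda(y)$," closing the induction requires
\[
\sum_{\substack{p\le y\\ p\equiv 1\,(q)}}\frac{1}{p\log(y/p)}\ \le\ \frac{k}{k-1}\cdot\frac{\Lambda(y)}{\log y},
\]
and this is false: a partial-summation computation (substitute $u=\log t/\log y$, so the sum becomes $\frac{\kappa}{\log y}\int\frac{du}{u(1-u)}(1+o(1))$ with $\kappa=1/\phi(q)$) shows the left side is $\sim 2\Lambda(y)/\log y$, with the ranges $p\le\sqrt y$ and $\sqrt y<p\le y/2$ each contributing a full $\kappa\log\log y/\log y$. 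In particular the upper range is \emph{not} absorbable by Brun--Titchmarsh plus integrability near $t=y$; it is of main-term size. The induction therefore only yields $T_k(y)\ll\frac{y}{\log y}\frac{(2\Lambda(y))^{k-1}}{(k-1)!}$, hence $S(y)\ll y(\log y)^{2/\phi(q)-1}$, which does not imply the lemma (for $\phi(q)\le 2$ it is trivial or worse, and for $\phi(q)\ge 3$ it still loses a power $(\log y)^{1/\phi(q)}$ against the required bound).

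The missing ingredient is exactly the mechanism that suppresses the range $\sqrt y<p\le y$: either (a) observe that an integer $n\le y$ has at most one prime factor exceeding $\sqrt y$, so the recursion can be run with $p\le\sqrt y$ only (at the cost of $k$ in place of $k+1$), \emph{and} retain the decaying weight $\Lambda(y/p)^{k-2}$ rather than majorizing it by $\Lambda(y)^{k-2}$, evaluating the resulting $p$-sum by an exact telescoping comparison as in the Hardy--Ramanujan/Tenenbaum treatment; or (b) bypass the $k$-decomposition entirely and apply the Levin--Fa\u{\i}nle\u{\i}b/Halberstam--Richert mean value inequality $\sum_{n\le y}g(n)\ll\frac{y}{\log y}\sum_{n\le y}\frac{g(n)}{n}$ to the indicator function $g$ of your set, which gives $S(y)\ll\frac{y}{\log y}\prod_{p\le y,\,p\equiv1(q)}\bigl(1+\tfrac1p\bigr)\ll y(\log y)^{1/\phi(q)-1}$ in two lines. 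Either repair yields a result stronger than the paper's Lemma 1; as written, though, your argument has a quantitative hole at its central step.
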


\begin{proof}
Fix a positive integer $q\geq 2$ and define
\begin{equation*}
\mathcal{B} (q, y)
 \mathrel{\mathop:}= \{n \leq y \colon \mu (n)\neq 0\ \!\text{and }\ \! p\! \mid\! n\ \!\text{ imply }\ \! p\equiv 1\pmod{q}\}.
\end{equation*}
We apply Brun's pure sieve to estimate the size of the set $\mathcal{B}(q,y)$. (See Murty and Cojocaru \cite[page 86]{RammurtyCojocaru2006}.) Let $\mathcal{A}$ be the set of all positive integers $n\leq y$. Let $\mathcal{P}$ be the set of all primes $p$ incongruent to $1$ modulo $q$. Let $\mathcal{A}_p$ be the set of elements of $\mathcal{A}$ which are divisible by $p$. Let, further, $\mathcal{A}_1 \mathrel{\mathop:}= \mathcal{A}$ and $\mathcal{A}_d \mathrel{\mathop:}= \bigcap_{p \mid d}\mathcal{A}_p$, where $d$ is a square-free positive integer composed of a list of prime factors from $\mathcal{P}$.  For any positive real number $z$, we define
\begin{equation*}
S(\mathcal{A}, \mathcal{P}, z)
 \mathrel{\mathop:}= \mathcal{A} \setminus \bigcup_{p \mid P (z)} \mathcal{A}_{p},
\end{equation*}
where
\begin{equation*}
P (z)
 \mathrel{\mathop:}= \prod_{\substack{p \in \mathcal{P} \\ p < z}} p.
\end{equation*}

We consider the multiplicative function $\omega$ defined for all primes $p$ by $\omega (p) \mathrel{\mathop:}= 1$. We have 
\begin{equation*}
\#\mathcal{A}_{d}
 = \#\{n \leq y \colon n \equiv 0 \pmod{d}\}
 = \frac{\omega (d)}{d} y + R_{d},
\end{equation*}
where
\begin{equation*}
\abs{R_{d}}
 \leq \omega (d).
\end{equation*} 
From Mertens's estimates, we have
\begin{equation*}
\sum_{\substack{p \in \mathcal{P} \\ p < z}} \frac{\omega (p)}{p}
 = \frac{\phi (q) - 1}{\phi (q)} \log \log z + O (1).
\end{equation*}
For the sake of brevity, we let 
\begin{equation*}
W(z)
 \mathrel{\mathop:}= \prod_{p \mid P (z)} \left(1 - \frac{\omega (p)}{p}\right).
\end{equation*}
By Brun's pure sieve, we have 
\begin{equation} \label{sv1}
\#S (\mathcal{A}, \mathcal{P},z)
 = y W (z) \left(1 + O \left((\log z)^{-A}\right)\right) + O \left(z^{\eta \log \log z}\right),
\end{equation}
where $A = \eta \log \eta$ and, for some $\alpha < 1$,
\begin{equation*}
\eta
 = \frac{\alpha \log y}{\log z \log \log z}.
\end{equation*}

Since $\omega (p)=1$, Mertens's estimates yield 
\begin{equation} \label{me2}
W(z)
 = O \left(\frac{1}{(\log z)^{1 - 1 / \phi (q)}}\right).
\end{equation}
We now choose $\log z = c \log y / \log\log y$. Then for a suitable positive and sufficiently small constant $c$ and from \eqref{sv1} and \eqref{me2}, we have
\begin{equation} \label{fe}
\#S (\mathcal{A}, \mathcal{P}, z)
 = O \left(y \left(\frac{\log \log y}{\log y}\right)^{1 - 1 / \phi (q)}\right).
\end{equation}
Since $\mathcal{B} (q, y) \subseteq S (\mathcal{A}, \mathcal{P}, z)$, we have $\#\mathcal{B} (q, z)\leq \#S (\mathcal{A}, \mathcal{P}, z)$. Employing this last inequality together with \eqref{fe}, we complete the proof of Lemma \ref{l1}.
\end{proof}

\begin{lemma} \label{l2}
Let $q \geq 2$ and let $K = \mathds{Q} (\zeta_{q})$. Let, further, 
\begin{equation*}
\zeta_{K} (s)
 = \sum_{n = 1}^{\infty} \frac{a (n)}{n^{s}}.
\end{equation*}
We have
\begin{equation*}
\#\{n \leq x \colon a (n) \neq 0\}
 = O_{q} \left(x \left(\frac{\log \log x}{\log x}\right)^{1 - 1 / \phi (q)}\right).
\end{equation*}
\end{lemma}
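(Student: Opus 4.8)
The plan is to translate the condition $a(n)\neq0$ into a purely multiplicative statement about the prime factorization of $n$, and then to count the eligible $n$ by splitting off a powerful part. Since $K=\mathds{Q}(\zeta_{q})$ is abelian, every rational prime $p\nmid q$ is unramified and its primes in $K$ all share a common residue degree $f_{p}$ equal to the order of $p$ modulo $q$; the local Euler factor of $\zeta_{K}$ at such a $p$ is $(1-p^{-f_{p}s})^{-g_{p}}$ with $g_{p}=\phi(q)/f_{p}$. Expanding this factor shows that the coefficient $a(p^{m})$ is nonzero precisely when $f_{p}\mid m$. As $a$ is multiplicative, this yields the characterization that $a(n)\neq0$ if and only if $f_{p}\mid v_{p}(n)$ for every prime $p\mid n$ (the finitely many ramified primes $p\mid q$ being handled by their own analogous local factors). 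The decisive consequence is the following: if a prime $p$ divides $n$ exactly once, then $f_{p}\mid1$, so $f_{p}=1$, which forces $p\equiv1\pmod{q}$ unless $p\mid q$.

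Next I would exploit the unique decomposition $n=s\,w$ of any integer into its squarefree part $s=\prod_{p\,\|\,n}p$ and a powerful part $w$, with $\gcd(s,w)=1$. By the consequence just noted, every prime dividing $s$ is either congruent to $1$ modulo $q$ or divides $q$; hence $s=s_{1}s_{2}$, where $s_{1}$ is squarefree with all prime factors $\equiv1\pmod{q}$ and $s_{2}\mid\operatorname{rad}(q)$, so that $s_{2}=O_{q}(1)$ takes at most $2^{\omega(q)}$ values. Thus $s_{1}$ lies in the set $\mathcal{B}(q,x)$ counted in Lemma \ref{l1}, while $w$ ranges over powerful numbers not exceeding $x/(s_{1}s_{2})$. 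Using the classical bound $\#\{w\ \text{powerful}\colon w\leq Y\}=O(\sqrt{Y})$ and discarding the coprimality constraint for an upper bound, I obtain
\begin{equation*}
\#\{n\leq x\colon a(n)\neq0\}
 \ll_{q}\sqrt{x}\sum_{\substack{s_{1}\leq x\\ s_{1}\in\mathcal{B}(q,x)}}s_{1}^{-1/2}.
\end{equation*}

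Finally, I would evaluate the remaining sum by partial summation, feeding in Lemma \ref{l1} in the form $\#\mathcal{B}(q,t)\ll_{q}t\,(\log\log t/\log t)^{1-1/\phi(q)}$. Because the factor $L(t)=(\log\log t/\log t)^{1-1/\phi(q)}$ varies slowly, one finds $\sum_{s_{1}\in\mathcal{B}(q,x)}s_{1}^{-1/2}\ll_{q}\sqrt{x}\,L(x)$, and multiplying by the prefactor $\sqrt{x}$ produces exactly $x\,L(x)$, which is the desired estimate. I expect the main obstacle to be not the analytic summation, which is routine, but the careful bookkeeping around the ramified primes dividing $q$ together with the justification that the squarefree/powerful decomposition interacts correctly with the divisibility condition $f_{p}\mid v_{p}(n)$; arranging for the powerful part to absorb every prime that is not $\equiv1\pmod{q}$ is precisely what allows the exponent $1-1/\phi(q)$ to survive rather than degrade.
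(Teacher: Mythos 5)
Your argument is correct, and it reaches the estimate by a genuinely different (and somewhat leaner) route than the paper. The paper does not invoke the splitting law directly: it factors $\zeta_{K}(s)$ as a ramified Euler product times $F_{q}(s)=\prod_{\chi\bmod q}L(s,\chi)$, computes via $\log F_{q}$ and character orthogonality that $c(p)\neq0$ forces $p\equiv1\pmod{q}$, decomposes each $n$ with $c(n)\neq0$ as a squareful number times an element of $\mathcal{B}(q,\cdot)$, and bounds $\sum_{A\ \mathrm{squareful}}\mathcal{B}(q,x/A)$ by splitting at $A=\sqrt{x}\log x$; it then needs a second convolution step ($\mathcal{A}\subseteq\mathcal{B}\mathcal{C}$ with $\#\mathcal{B}_{\omega}=O((\log\omega)^{r})$ and a cutoff $L=(\log x)^{r+1}$) to reabsorb the ramified primes. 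You instead read the condition $a(p^{m})\neq0\iff f_{p}\mid m$ straight off the local factors $(1-p^{-f_{p}s})^{-g_{p}}$, so the ramified primes cost only the bounded factor $s_{2}\mid\operatorname{rad}(q)$ and no separate Dirichlet-series argument; and you perform the same double sum in the opposite order, summing the powerful-number count $O(\sqrt{x/s_{1}})$ over $s_{1}\in\mathcal{B}(q,x)$ and finishing by partial summation rather than summing $\mathcal{B}(q,x/A)$ over squareful $A$. Both versions hinge on the identical key observation --- a prime exactly dividing $n$ must split completely, hence lie in $1\bmod q$ --- and both feed it into Lemma \ref{l1}; what yours buys is brevity and the elimination of the $L$-function machinery and the $\mathcal{B}\mathcal{C}$ step, while the paper's buys a derivation that uses only character orthogonality and the cited factorization from Neukirch rather than the cyclotomic splitting law. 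Two small points you should make explicit when writing this up: in the partial summation, split at $s_{1}=\sqrt{x}$ so that the range $s_{1}\leq\sqrt{x}$ contributes only $O(x^{1/4})$ and on $[\sqrt{x},x]$ one has $L(t)\ll L(x)$ by slow variation; and note that nonvanishing of $a(n)$ is detected prime-by-prime because $a$ is multiplicative with nonnegative values (it counts ideals of a given norm).
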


\begin{proof}
Let $K = \mathds{Q} (\zeta_{q})$, where $\zeta_{q}$ is a primitive root of unity of order $q$.
We have
\begin{equation*}
\zeta_{K}
 = \prod_{\mathcal{P} \mid q} \left(1 - \frac{1}{\| \mathcal{P} \|^s}\right)^{-1} F_{q} (s),
\end{equation*}
where
\begin{equation*}
F_{q} (s)
 = \prod_{\chi \pmod{q}} L (s, \chi).
\end{equation*}
(See \cite[page 468]{Neukirch1999}.) For $\sigma > 1$, we have
\begin{equation*}
F_{q} (s)
 = \prod_{\chi \pmod{q}} \prod_{\substack{p \ \! \text{prime} \\ p \nmid q}} \left(1 - \frac{\chi (p)}{p^{s}}\right).
\end{equation*}
Hence, for $\sigma > 1$, we have
\begin{equation*}
\begin{split}
\log F_{q} (s)
 &= -\sum_{\chi \pmod{q}} \sum_{\substack{p \ \! \text{prime} \\ p \nmid q}} \log \left(1 - \frac{\chi (p)}{p^{s}}\right) \\
 &= \sum_{\chi \pmod{q}} \sum_{\substack{p \ \! \text{prime} \\ p \nmid q}} \sum_{m = 1}^{\infty} \frac{\chi (p)}{m p^{m s}} \\
 &= \sum_{\substack{p \ \! \text{prime} \\ p \nmid q}} \sum_{m = 1}^{\infty} \sum_{\chi \pmod{q}} \chi (p^{m}),
\end{split}
\end{equation*}
where
\begin{equation*}
\sum_{\chi \pmod{q}} \chi (p^{m})
 = \left\{ \begin{array}{ll}
      \phi (q), & \mbox{if $p^{m} \equiv 1 \pmod{q}$;} \\
      0, & \mbox{otherwise.} \\
\end{array}
\right.
\end{equation*}
It follows that
\begin{equation*}
\log F_{q} (s)
 = \sum_{\substack{p \ \! \text{prime}, \ \! m \geq 1 \\ p^{m} \equiv 1\pmod{q}}} \frac{\phi (q)}{m p^{m s}}.
\end{equation*}
Hence, we have
\begin{equation*}
F_{q} (s)
 = \exp \left(\sum_{\substack{p \ \! \text{prime},\ \! m \geq 1 \\ p^{m} \equiv 1\pmod{q}}} \frac{\phi (q)}{m p^{m s}}\right).
\end{equation*}

Now, for $\sigma > 1$,
\begin{equation*}
F_{q} (s)= \sum_{n = 1}^{\infty} \frac{c (n)}{n^{s}}
 = \prod_{p \ \! \text{prime}} \left(1 + \frac{c (p)}{p^{s}} + \frac{c (p^{2})}{p^{2 s}} + \ldots\right).
\end{equation*}
Thus, we have
\begin{equation*}
\begin{split}
\log F_{q} (s)
 &= \sum_{p \ \! \text{prime}} \log \left(1 + \frac{c (p)}{p^{s}} + \frac{c (p^{2})}{p^{2 s}} + \ldots\right) \\
 &= \sum_{p \ \! \text{prime}} \sum_{m = 1}^{\infty} \frac{(-1)^{m}}{m} \left(\frac{c (p)}{p^{s}} + \frac{c (p^{2})}{p^{2 s}} + \ldots\right)^{m},
\end{split}
\end{equation*}
and hence
\begin{equation*}
c (p)
 = \left\{ \begin{array}{ll}
      \phi (q), & \mbox{if $p \equiv 1 \pmod{q}$;} \\
      0, & \mbox{if $p \not\equiv 1 \pmod{q}$.} \\
\end{array}
\right.
\end{equation*}

For all $n$ such that $c (n) \neq 0$, we have $n = A B$, where $A$ is coprime to $B$, $A$ is squareful, and $B$ is square-free, that is, $\mu (B) \neq 0$. Furthermore, all the prime factors of $B$ are congruent to 1 modulo $q$. Letting
\begin{equation*}
H(x)
 \mathrel{\mathop:}= \prod_{\substack{p \leq x, \ \!p\ \!\text{prime} \\ p \equiv 1 \pmod{q}}} p,
\end{equation*}
we have
\begin{equation*}
\begin{split}
\# \{n \leq x \colon c (n) \neq 0\}
 &\leq \# \{(A, B) \colon \text{$A$ squareful, $\mu (B) \neq 0$, $A B \leq x$, $B \mid H(x)$}\} \\
 &= \sum_{\substack{A \leq x \\ \text{$A$ squareful}}} \sum_{\substack{B \leq x / A \\ B \mid H(x)}} 1 \\
 &= \sum_{\substack{A \leq x \\ \text{$A$ squareful}}} \mathcal{B}\left(q,\frac{x}{A}\right) \\
 &= \sum_{\substack{A \leq \sqrt{x} \log x \\ \text{$A$ squareful}}} \mathcal{B}\left(q,\frac{x}{A}\right) + \sum_{\substack{\sqrt{x} \log x \leq A \leq x \\ \text{$A$ squareful}}} \mathcal{B}\left(q,\frac{x}{A}\right).
\end{split}
\end{equation*}
We examine the sums on the far right-hand side separately.

Using Lemma \ref{l1}, we see that
\begin{equation*}
\begin{split}
\sum_{\substack{A \leq \sqrt{x} \log x \\ \text{$A$ squareful}}}\mathcal{B}\left(q,\frac{x}{A}\right)
 &=O\left( \sum_{\substack{A \leq \sqrt{x} \log x \\ \text{$A$ squareful}}} \frac{x}{A}\left(\frac{\log\log x}{\log x}\right)^{1 -  1 / \phi (q)}\right) \\
 &= O \left(x\left(\frac{\log\log x}{\log x}\right)^{1 -  1 / \phi (q)} \sum_{\substack{A \leq \sqrt{x} \log x \\ \text{$A$ squareful}}} \frac{1}{A}\right) \\
 &= O \left(x\left(\frac{\log\log x}{\log x}\right)^{1 -  1 / \phi (q)} \sum_{a\geq1, b\geq 1} \frac{1}{a^2b^3}\right) \\
 &= O \left(x\left(\frac{\log\log x}{\log x}\right)^{1 -  1 / \phi (q)} \right).
\end{split}
\end{equation*}
Furthermore, we have
\begin{equation*}
\begin{split}
\sum_{\substack{\sqrt{x} \log x \leq A \leq x \\ \text{$A$ squareful}}} \mathcal{B}\left(q,\frac{x}{A}\right)
 &\leq \sum_{\substack{\sqrt{x} \log x \leq A \leq x \\ \text{$A$ squareful}}} \frac{x}{A} \\
 &\leq \sum_{\substack{\sqrt{x} \log x\leq A \leq x \\ \text{$A$ squareful}}} \frac{x}{\sqrt{x} \log x} \\
 &\leq \frac{\sqrt{x}}{\log x} \# \{A \leq x \colon \text{$A$ squareful}\} \\
 &= O \left(\frac{x}{\log x}\right).
\end{split}
\end{equation*}
	
Suppose that $\mathcal{P}_{1}, \ldots, \mathcal{P}_{r}$ are the prime ideals in the ring of integers of $K$ lying over the prime factors of $q$ and consider the Dirichlet series
\begin{equation*}
\sum_{n = 1}^{\infty} \frac{b (n)}{n^{s}}
 = \prod_{\mathcal{P} \mid p} \left(1 - \frac{1}{\|\mathcal{P}\|^{s}}\right)^{-1}.
\end{equation*}
For all $z$, we have
\begin{equation} \label{bnn}
\# \{n \leq z \colon b (n) \neq 0\}
 \leq \# \{\text{$n \leq z$ with all prime factors of $n$ in the sets $\mathcal{P}_{1}, \ldots, \mathcal{P}_{r}$}\}.
\end{equation}
It is well-known that the right-hand side of \eqref{bnn} is $O_{q} ((\log z)^{r})$. Thus, we have
\begin{equation*}
\# \{n \leq z \colon b (n) \neq 0\}
 = O_{q} ((\log z)^{r}).
\end{equation*}

For brevity's sake, we let
\begin{equation*}
\mathcal{A}
 = \{n \colon a (n) \neq 0\}, \quad
\mathcal{B}
 = \{m \colon b (m) \neq 0\}, \quad
\mathcal{C}
 = \{k \colon c (k) \neq 0\},
\end{equation*}
and denote
\begin{equation*}
\mathcal{A}_{\omega}
 = \mathcal{A} \cap [1, \omega], \quad
\mathcal{B}_{\omega}
 = \mathcal{B} \cap [1, \omega], \quad
\mathcal{C}_{\omega}
 = \mathcal{C} \cap [1, \omega].
\end{equation*}
Here, we note that
\begin{equation*}
\# \mathcal{B}_{\omega}
 = O_{r} ((\log \omega)^{r})
\end{equation*}
and
\begin{equation} \label{eq8}
\# \mathcal{C}_{\omega}
 = O_{q} \left(\omega\left(\frac{\log\log \omega}{\log \omega}\right)^{1 - 1 / \phi (q)}\right).
\end{equation}
Furthermore, we have
\begin{equation*}
\zeta_{K}(s)=\sum_{n \in \mathcal{A}} \frac{a (n)}{n^{s}}
 = \sum_{m \in \mathcal{B}} \frac{b (m)}{m^{s}} \sum_{k \in \mathcal{C}} \frac{c (k)}{k^{s}}.
\end{equation*}

On noting that $\mathcal{A} \subseteq \mathcal{B} \mathcal{C}$, where $\mathcal{B} \mathcal{C} = \{b c \colon b \in \mathcal{B}, c \in \mathcal{C}\}$, we have $\mathcal{A}_{x} \subset (\mathcal{B} \mathcal{C})_{x}$. It follows that
\begin{equation} \label{eq9}
\# \mathcal{A}_{x}
 \leq \# (\mathcal{B} \mathcal{C})_{x},
\end{equation}
where
\begin{equation} \label{eq10}
\# (\mathcal{B} \mathcal{C})_{x}
 = \sum_{\substack{b \leq x \\ b \in \mathcal{B}}} \sum_{\substack{c \leq x / b \\ c \in \mathcal{C}}} 1
 = \sum_{\substack{b \leq L \\ b \in \mathcal{B}}} \sum_{\substack{c \leq x / b \\ c \in \mathcal{C}}} 1 + \sum_{\substack{L < b \leq x \\ b \in \mathcal{B}}} \sum_{\substack{c \leq x / b \\ c \in \mathcal{C}}} 1,
\end{equation}
with $1 \leq L \leq x$ (to be chosen later). By \eqref{eq8}, we have
\begin{equation*}
\sum_{\substack{b \leq L \\ b \in \mathcal{B}}} \sum_{\substack{c \leq x / b \\ c \in \mathcal{C}}} 1
 \leq \sum_{\substack{b \leq L \\ b \in \mathcal{B}}} \# \mathcal{C}_{x / b}
 = O \left(\sum_{\substack{b \leq L \\ b \in \mathcal{B}}}  \frac{x}{b}\left(\frac{\log\log (x/b)}{\log (x/b)}\right)^{1 -  1 / \phi (q)}\right).
\end{equation*}
Since $b \leq L$, we have
\begin{equation*}
\left(\log \frac{x}{b}\right)^{1 - 1 / \phi (q)}
 > \left(\log \frac{x}{L}\right)^{1 - 1 / \phi (q)}.
\end{equation*}
Hence, we have
\begin{equation} \label{eq11}
\sum_{\substack{b \leq L \\ b \in \mathcal{B}}} \sum_{\substack{c \leq x / b \\ c \in \mathcal{C}}} 1
 = O \left(x\left(\frac{\log\log x}{\log x/L}\right)^{1 -  1 / \phi (q)} \sum_{\substack{b \leq L \\ b \in \mathcal{B}}} \frac{1}{b}\right)
 = O \left(x\left(\frac{\log\log x}{\log (x/L)}\right)^{1 -  1 / \phi (q)}\right),
\end{equation}
since
\begin{equation*}
\sum_{\substack{b\in\mathcal{B}}} \frac{1}{b}
 < \infty.
\end{equation*}

Next, we have 
\begin{equation} \label{eq12}
\sum_{\substack{L < b \leq x \\ b \in \mathcal{B}}} \sum_{\substack{c \leq x / b \\ c \in \mathcal{C}}} 1= \sum_{\substack{L < b \leq x \\ b \in \mathcal{B}}} \# \mathcal{C}_{x / b} \leq \sum_{\substack{L < b \leq x \\ b \in \mathcal{B}}} \frac{x}{b} \leq \frac{x}{L} \# \mathcal{B}_{x} = O\left(\frac{x (\log x)^{r}}{L}\right).
\end{equation}

In view of \eqref{eq9}, we substitute \eqref{eq11} and \eqref{eq12} into \eqref{eq10} to obtain
\begin{equation*}
\# \mathcal{A}_{x}
 = O \left(\frac{x (\log x)^{r}}{L}\right) + O \left(x\left(\frac{\log\log x}{\log (x/L)}\right)^{1 -  1 / \phi (q)}\right).
\end{equation*}
Then choosing $L = (\log x)^{r + 1}$, we obtain
\begin{equation*}
\# \mathcal{A}_{x}
 = O \left(x\left(\frac{\log\log x}{\log x}\right)^{1 - 1 / \phi (q)}\right).
\end{equation*}
This finishes the proof of Lemma \ref{l2}.
\end{proof}

\section{Proof of Proposition \ref{prop1}}

We show separately that $\abs{\zeta_{K, X} (s)} > 0$ in the right half-plane $\sigma \geq \beta$ and in the left-half plane $\sigma \leq \alpha$. More specifically, we want to find a $\beta$ so that 
\begin{equation*}
1 - \sum_{2 \leq n \leq  X } \frac{a (n)}{n^{\sigma}}
 > 0,
\end{equation*}
for $\sigma\geq \beta$. Toward this end, we employ the upper bound $a (n) \leq d (n)^{n_{0}-1}$, where $d (n)$ denotes the number of divisors of $n$ (see Chandrasekharan and Narasimhan \cite{ChandrasekharanNarasimhan1963}, Lemma 9) and satisfies the upper bound $d (n) \leq C_{\epsilon_{0}} n^{\epsilon_{0}}$ for all positive $\epsilon_{0}$ (see Hardy and Wright \cite{HardyWright2008}, Chapter XVIII, Theorem 317). Hence, we have $a (n) \leq C_{\epsilon_{0}, n_{0}} n^{\epsilon_{0} n_{0}}$.

It is enough to show that
\begin{equation} \label{eq13}
C_{\epsilon_{0}, n_{0}} \sum_{n = 2}^{\infty} \frac{1}{n^{\sigma - \epsilon_{0} n_{0}}}
 < 1.
\end{equation}
If we let $\epsilon_{0} < 1 / n_{0}$, then for $\sigma \geq \beta$ we have
\begin{equation*}
\sum_{n = 2}^{\infty} \frac{1}{n^{\sigma - \epsilon_{0} n_{0}}}
 \leq \sum_{n = 2}^{\infty} \frac{1}{n^{\beta - \epsilon_{0} n_{0}}}
 \leq \frac{1}{2^{\beta}} D_{\epsilon_{0}, n_{0}},
\end{equation*}
where
\begin{equation*}
D_{\epsilon_{0}, n_{0}}
 = \sum_{n = 2}^{\infty} \frac{4}{n^{2 - \epsilon_{0} n_{0}}}.
\end{equation*}
In order to obtain \eqref{eq13}, it is enough to have
\begin{equation*}
\beta
 > \frac{\log C_{\epsilon_{0}, n_{0}} D_{\epsilon_{0}, n_{0}}}{\log 2}.
\end{equation*}

We have
\begin{equation*}
\sum_{n = 2}^{\infty} \frac{d (n)^{n_{0}}}{n^{\beta}}
 \leq C_{\epsilon_{0}, n_{0}} \sum_{n =2}^{\infty} \frac{1}{n^{\beta - \epsilon_{0} n_{0}}}
 = \frac{1}{2^{\beta}} C_{\epsilon_{0}, n_{0}} D_{\epsilon_{0}, n_{0}}.
\end{equation*}
Then for $\sigma \geq \beta$, we have
\begin{equation} \label{eq14}
\left|\sum_{2 \leq n \leq X} \frac{a (n)}{n^{s}}\right|
\leq \sum_{2 \leq n \leq X} \frac{d (n)^{n_{0}}}{n^{\beta}}< 1,
\end{equation}
and hence
\begin{equation*}
\abs{\zeta_{K, X} (s)}
 \geq 1 - \left|\sum_{2 \leq n \leq  X } \frac{a (n)}{n^{s}}\right|
 > 0.
\end{equation*}
Therefore, $\zeta_{K, X} (s) \neq 0$ on the right-half plane $\sigma \geq \beta$.

Next, let $N$ be the largest positive integer less than or equal to $X$ for which the coefficient $a (N)$ is nonzero. Since
\begin{equation*}
\abs{\zeta_{K, X} (s)}
 \geq \frac{a (N)}{N^{\sigma}} - \left|\sum_{1 \leq n \leq N - 1} \frac{a (n)}{n^{s}}\right|,
\end{equation*}
it is enough to find an $\alpha$ such that 
\begin{equation*}
\frac{1}{N^{\sigma}}
 > \sum_{1 \leq n \leq N - 1} \frac{a (n)}{n^{\sigma}},
\end{equation*}
for $\sigma\leq \alpha$.

To this end, let us fix $\delta_{0}>0$. Then there exist constants $C_{\delta_{0}} > 0$ and $n_{\delta_{0}} \in \mathds{Z}^{+}$ such that for all $1 \leq n < n_{\delta_{0}}$, we have
\begin{equation*}
d (n)
 \leq C_{\delta_{0}} n^{(\delta_{0} + \log 2) / \log \log n},
\end{equation*}
and that for all $n \geq n_{\delta_{0}}$, we have
\begin{equation*}
d (n)
 \leq n^{(\delta_{0} + \log 2) / \log \log n}.
\end{equation*}
(See Wigert \cite{Wigert1906-1907}.)

It suffices to have
\begin{equation*}
\begin{split}
\frac{1}{N^{\sigma}}
 &> C_{\delta_{0}}^{n_{0}} \sum_{1 \leq n \leq n_{\delta_{0}} - 1} \frac{n^{(\delta_{0} + \log 2) n_{0} / \log \log n}}{n^{\sigma}} + \sum_{n_{\delta_{0}} \leq n \leq N - 1} \frac{n^{(\delta_{0} + \log 2) n_{0} / \log \log n}}{n^{\sigma}} \\
 &= 1 + C_{\delta_{0}}^{n_{0}} S_{I} (n_{0}, \delta_{0}, n_{\delta_{0}}, \sigma) + S_{II} (n_{0}, \delta_{0}, \sigma),
\end{split}
\end{equation*}
for $\sigma \leq \alpha$, where
\begin{equation*}
S_{I} (n_{0}, \delta_{0}, n_{\delta_{0}}, \sigma)
 = \sum_{2 \leq n \leq n_{\delta_{0}} - 1} \frac{n^{(\delta_{0} + \log 2) n_{0} / \log \log n}}{n^{\sigma}}
\end{equation*}
and
\begin{equation*}
S_{II} (n_{0}, \delta_{0}, \sigma)
 = \sum_{n_{\delta_{0}} \leq n \leq N - 1} \frac{n^{(\delta_{0} + \log 2) n_{0} / \log \log n}}{n^{\sigma}}.
\end{equation*}
This would follow from the inequality
\begin{equation*}
\frac{1}{N^{\alpha}}
 > 1 + C_{\delta_{0}}^{n_{0}} S_{I} (n_{0}, \delta_{0}, n_{\delta_{0}}, \alpha) + S_{II} (n_{0}, \delta_{0}, \alpha),
\end{equation*}
since, for any $\sigma \leq \alpha$,
\begin{equation*}
\begin{split}
\frac{1}{N^{\sigma}}
 &> \frac{1}{N^{\sigma - \alpha}} \left[1 + C_{\delta_{0}}^{n_{0}} S_{I} (n_{0}, \delta_{0}, n_{\delta_{0}}, \alpha) + S_{II} (n_{0}, \delta_{0}, \alpha)\right] \\
 &= \frac{1}{N^{\sigma - \alpha}} + C_{\delta_{0}}^{n_{0}} \sum_{2 \leq n \leq n_{\delta_{0}} - 1} \frac{n^{(\delta_{0} + \log 2) n_{0} / \log \log n}}{N^{\sigma - \alpha} n^{\alpha}} + \sum_{n_{\delta_{0}} \leq n \leq N - 1} \frac{n^{(\delta_{0} + \log 2) n_{0} / \log \log n}}{N^{\sigma - \alpha} n^{\alpha}}\\
 &> 1 + C_{\delta_{0}}^{n_{0}} \sum_{2 \leq n \leq n_{\delta_{0}} - 1} \frac{n^{(\delta_{0} + \log 2) n_{0} / \log \log n}}{n^{\sigma - \alpha} n^{\alpha}} + \sum_{2 \leq n \leq N - 1} \frac{n^{(\delta_{0} + \log 2) n_{0} / \log \log n}}{n^{\sigma - \alpha} n^{\alpha}} \\
 &= 1 + C_{\delta_{0}}^{n_{0}} S_{I} (n_{0}, \delta_{0}, n_{\delta_{0}}, \sigma) + S_{II} (n_{0}, \delta_{0}, \sigma).
\end{split}
\end{equation*}
Thus, it is enough to find $\alpha$ such that
\begin{equation} \label{eq15}
\frac{1}{N^{\alpha}}
 > 2 + 2 C_{\delta_{0}}^{n_{0}} S_{I} (n_{0}, \delta_{0}, n_{\delta_{0}}, \alpha)
\end{equation}
and such that
\begin{equation} \label{eq16}
\frac{1}{N^{\alpha}}
 > 2 S_{II} (n_{0}, \delta_{0}, \alpha).
\end{equation}

It is enough to have
\begin{equation} \label{eq17}
\frac{1}{N^{\alpha}}
 > 2 + 2 C_{\delta_{0}}^{n_{0}} \frac{1}{n_{\delta_{0}}^{\alpha}} \sum_{2 \leq n \leq n_{\delta_{0}} - 1} n^{(\delta_{0} + \log 2) n_{0} / \log \log n},
\end{equation}
since the right-hand side of \eqref{eq17} is greater than the right-hand side of \eqref{eq15}.

The inequality in \eqref{eq17} holds for any fixed $\alpha < 0$ and for all $N$ large enough in terms of $n_{0}$, $\delta_{0}$, $n_{\delta_{0}}$, $C_{\delta_{0}}$, and $\alpha$. Therefore, we may take any fixed $\alpha < 0$ as a function of $N$, $n_{0}$, and $\delta_{0}$ for which \eqref{eq16} holds true. 
For $n_{\delta_0}\geq 16$, we see that 
\begin{equation} \label{eq18}
\begin{split}
\sum_{n_{\delta_{0}} \leq n \leq N - 1} \frac{n^{(\delta_{0} + \log 2) n_{0} / \log \log n}}{n^{\alpha}}
 &\leq \sum_{n_{\delta_{0}} \leq n \leq N - 1} \frac{N^{(\delta_{0} + \log 2) n_{0} / \log \log N}}{n^{\alpha}} \\
 &< N^{(\delta_{0} + \log 2) n_{0} / \log \log N} \sum_{n_{\delta_{0} \leq n \leq N - 1}} \frac{1}{n^{\alpha}}.
\end{split}
\end{equation}
It remains to examine the sum on the far-right hand side of \eqref{eq18}.

For $\alpha < 0$, we have
\begin{equation*}
\sum_{n_{\delta_{0}} \leq n \leq N - 1} \frac{1}{n^{\alpha}}
 \leq (N - 1)^{-\alpha} + \int_{n_{\delta_{0}}}^{N - 1} \frac{\,dy}{y^{\alpha}}  
 < (N - 1)^{-\alpha} \left(\frac{ N-\alpha}{1-\alpha}\right).
\end{equation*}
It follows from \eqref{eq18} that \eqref{eq16} is consequence of 
\begin{equation*}
N^{-\alpha}
 > 2 N^{(\delta_{0} + \log 2) n_{0} / \log \log N} (N - 1)^{-\alpha} \left(\frac{ N-\alpha}{1-\alpha}\right).
\end{equation*}
One sees that an admissible choice of $\alpha$ is given by
\begin{equation*}
\alpha=-3(\delta_{0} + \log 2) n_{0} \frac{N \log N}{\log \log N}.
\end{equation*}
 Then $\zeta_{K, X} (s) \neq 0$ in the left-half plane $\sigma \leq \alpha$. This completes the proof of Proposition \ref{prop1}.

\section{Proof of Theorem \ref{thm1}}

Assuming for simplicity's sake that $T$ does not coincide with the ordinate of any zero, we have
\begin{equation*}
N_{K, X} (T)
  = \frac{1}{2 \pi i} \int_{R} \frac{\zeta_{K, X}' (s)}{\zeta_{K, X} (s)} \,ds,
\end{equation*}
where $R$ is the rectangle with vertices at $\alpha$, $\beta$, $\beta + i T$, and $\alpha + i T$.
Thus, we have
\begin{equation} \label{eq19}
2 \pi N_{K, X} (T)
 = \int_{R} \mbox{Im} \left(\frac{\zeta_{K, X}' (s)}{\zeta_{K, X} (s)}\right) \,ds
 = \triangle_{R} \arg \zeta_{K, X} (s),
\end{equation}
where $\triangle_{R}$ denotes the change in $\arg \zeta_{K, X} (s)$ as $s$ traverses $R$ in the positive sense.

Since $\zeta_{K, X} (s)$ is real and nonzero on $[\alpha,\beta]$, we have
\begin{equation} \label{eq20}
  \triangle_{[\alpha, \beta]} \arg \zeta_{K, X} (\sigma) \\
 = 0.
\end{equation}

As $s$ describes the right edge of $R$, we observe from \eqref{eq14} that 
\begin{equation*}
\abs{\zeta_{K, X} (s) -1}
 <1.
\end{equation*}
It follows that Re $\zeta_{K, X} (\beta + i t) > 0$ for $0 \leq t \leq T$. Hence, we have
\begin{equation} \label{eq21}
\triangle_{[0, T]} \arg \zeta_{K, X} (\beta + i t)
 = O (1).
\end{equation}

Furthermore, along the top edge of $R$, to estimate the change in $\arg \zeta_{K, X} (s)$ we decompose $\zeta_{K, X} (s)$ into its real part and its imaginary part. We have
\begin{equation*}
\zeta_{K, X} (s)= \sum_{n \leq \lbrack X \rbrack} a (n) \exp \{{-(\sigma + i t) \log n}\} = \sum_{n \leq \lbrack X \rbrack} \frac{a (n) [\cos (t \log n) - i \sin (t \log n)]}{n^{\sigma}},
\end{equation*}
so that
\begin{equation*}
\mbox{Im} (\zeta_{K, X} (\sigma + i T))
 = - \sum_{n \leq \lbrack X \rbrack} \frac{a (n) \sin (T \log n)}{n^{\sigma}}.
\end{equation*}
By a generalization of Descartes's Rule of Signs (see P\'{o}lya and Szeg\"{o} \cite{PolyaSzego1971}, Part V, Chapter 1, No. 77), the number of real zeros of $\mbox{Im} (\zeta_{K, X} (\sigma + i T))$ in the interval $\alpha \leq \sigma \leq \beta$ is less than or equal to the number of nonzero coefficients $a (n) \sin (T \log n)$. By Lemma \ref{l2}, the number of nonzero coefficients $a (n)$ is $O (X (\log\log X / (\log X)^{1 - 1 / \phi (q)})$ at most.

Since the change in argument of $\zeta_{K, X} (\sigma + i T)$ between two consecutive zeros of  $\mbox{Im} (\zeta_{K, X} (\sigma + i T))$ is at most $\pi$, it follows that
\begin{equation} \label{eq22}
  \triangle_{[\alpha,\beta]} \arg \zeta_{K, X} (\sigma + i T) \\
 = O \left(X\left(\frac{\log\log X}{\log X}\right)^{1 - 1 / \phi (q)}\right).
\end{equation}

As in the proof of Proposition \ref{prop1}, we let $N$ be the largest integer less than or equal to $X$ so that $a(N)\neq 0$. Along the left edge of $R$, we have
\begin{equation*}
\zeta_{K, X} (\alpha + i t)
 = \left[1 + \frac{1 + a (2) 2^{-\alpha - i t} + \ldots + a (N  - 1) ( N - 1)^{-\alpha - i t}}{a( N) N^{-\alpha - i t}}\right]a(N)N^{-\alpha - i t}.
\end{equation*}
Therefore, we have
\begin{equation} \label{eq23}
\begin{split}
  \triangle_{[0, T]} \arg \zeta_{K, X} (\alpha + i t) 
 &= \triangle_{[0, T]} \arg \left[1 + \frac{1 + a (2) 2^{-\alpha - i t} + \ldots + a (N  - 1) ( N - 1)^{-\alpha - i t}}{a( N) N^{-\alpha - i t}}\right] \\ & \quad + \triangle_{[0, T]} \arg a(N)N^{-\alpha - i t}.
\end{split}
\end{equation}

In the proof of Proposition \ref{prop1}, we noticed that
\begin{equation*}
\frac{a(N)}{N^{\alpha}}
 > \sum_{1 \leq n \leq N- 1} \frac{a (n)}{n^{\alpha}}.
\end{equation*}
Thus, for any $t$, we have
\begin{equation*}
\left\lvert \frac{1 + a (2) 2^{-\alpha - i t} + \ldots + a (N - 1) (N- 1)^{-\alpha - i t}}{a(N)N^{-\alpha - i t}}\right\rvert
 < 1,
\end{equation*}
and hence
\begin{equation} \label{eq24}
\triangle_{[0, T]} \arg \left[1 + \frac{1 + a (2) 2^{-\alpha - i t} + \ldots + a (N - 1) (N- 1)^{-\alpha - i t}}{a(N) N^{-\alpha -i t}}\right]
 = O (1).
\end{equation}

Finally, we have
\begin{equation} \label{eq25}
\begin{split}
\triangle_{[0, T]} \arg a (N) N^{-\alpha -i t}
&=\triangle_{[0, T]} \arg a (N) N^{-\alpha} \exp\{-i t \log N\}\\
&=\triangle_{[0, T]} \arg \exp\{-i t \log N\}\\
 &=- T \log N.
\end{split}
\end{equation}
Then substituting \eqref{eq24} and \eqref{eq25} into \eqref{eq23}, we obtain
\begin{equation} \label{eq26}
  \triangle_{[0, T]} \arg \zeta_{K, X} (\alpha + i t) = -T \log N + O (1).
\end{equation}
Since 
\begin{equation*}
\begin{split}
\triangle_{R} \arg \zeta_{K, X} (s)
 &= \triangle_{[\alpha, \beta]} \arg \zeta_{K, X} (\sigma) + \triangle_{[0, T]} \arg \zeta_{K, X} (\beta + i t)  \\ & \quad -\triangle_{[\alpha,\beta]} \arg \zeta_{K, X} (\sigma + i T)- \triangle_{[0, T]} \arg \zeta_{K, X} (\alpha + i t),
\end{split}
\end{equation*}
we may now substitute \eqref{eq20}, \eqref{eq21}, \eqref{eq22}, \eqref{eq26} into \eqref{eq19} to obtain Theorem \ref{thm1}.

\end{document}